\documentclass[11pt, one side, article]{memoir}
\usepackage{coll,amsmath,amssymb,amsthm, verbatim, tikz-cd, graphicx, quiver, mathtools, xparse, pgf,pgfplots, color}
\usepackage[utf8]{inputenc}
\usepackage{graphicx, color, hyperref}
\usepackage{dcolumn}
\usepackage{bm}
\usepackage{amsmath}
\usepackage{amsthm}
\usepackage{xparse} 
\usepackage{mathtools} 
\usepackage[caption=false]{subfig}
\usepackage{blkarray} 
\usepackage{amscd,verbatim}
\usepackage{amssymb}
\usepackage[all]{xy}
\usepackage{tikz}
\usetikzlibrary{calc,fit}
\usetikzlibrary{fit}
\usepackage{pgf,pgfplots}
\usepackage{enumitem} 

\newcommand{\define}[1]{\textbf{#1}}
\newcommand{\R}{\mathbb{R}}

\definecolor{mydarkred}{RGB}{233,20,35}
\definecolor{mypurple}{RGB}{120, 35, 160}
\definecolor{mydarkpurple}{RGB}{128, 100, 162}
\definecolor{mybrown}{RGB}{255, 195, 0}
\definecolor{myaqua}{RGB}{29, 153, 168}
\definecolor{myblue}{RGB}{91, 129, 184}  
\definecolor{mygreen}{RGB}{155, 187, 89}  

\definecolor{mybrightblue}{RGB}{0, 140, 255}  

\tikzstyle{species}=[
  circle,draw=black!100, thick, inner sep=0pt,minimum size=6mm
  ] 
\tikzstyle{reaction}=[rectangle,draw=black!100,fill=black!15,thick, inner sep=0pt,minimum size=6mm]

\title{Categorical Perspectives on Chemical Reaction Networks}
\author{Justin Curry, Mauricio Montes}
\date{}
\begin{document}

\maketitle

\begin{abstract}
    We show that the Schur-complement reduction of \cite{Hirono2021} a chemical reaction network (CRN) is the categorical complement of the stoichiometric arrow in the arrow category $[\mathbf{A}_2,\mathbf{Vect}]$. This identifies the ambient category in which topological reduction of chemical reaction networks is functorial and explains the reduced stoichiometric matrix as a universal diagrammatic construction. We further define a reconstruction functor from a restricted subcategory of $[\mathbf{A}_2, \mathbf{Vect}]$ back to CRNs and prove an adjunction with the stoichiometric functor. 
\end{abstract}

\chapter{Introduction}
\label{sec:intro}

Chemical reaction networks (CRNs) serve as a foundational framework for understanding complex systems across chemistry, biology, and engineering. Designing these CRNs requires the construction of detailed models with specified kinetics, initial conditions, and parameter values. Although such models can yield precise predictions, their complexity makes them unwieldy, particularly in biological systems where numerous variables interact in nonlinear ways \cite{Konieczny2023, Tanner2018-hj, FEINBERG1974775, FEINBERG198059}. As a result, there has been lots of research done to study and simplify these CRNs \cite{hycosbm, jost2018hypergraph, PhysRevX.13.021040, MorphismsReaction, FEINBERG19891819}.

Existing reduction methods, such as timescale separation \cite{10.1214/12-AAP841}, lumping \cite{Wei1969, Kuo1969}, sensitivity analysis, optimization techniques, and topological reductions \cite{Hirono2021, rao2012graphtheoreticalapproachanalysismodel}, offer powerful tools for simplifying CRNs. These reductions are invaluable—they not only decrease the number of variables and parameters needed for analysis but also illuminate the key features driving phenomena of interest. However, many of these approaches often require detailed, system-specific information, such as the classification of reactions into fast and slow processes or the parameter dependence of system dynamics. In contrast, topological reductions stand out for their broad applicability, relying solely on the network structure of the CRN, and are independent of specific type of kinetics or parameter values. This independence is particularly significant, as the kinetics of reactions or the values of parameters are often difficult to determine in systems, and may require sophisticated tools to achieve an accurate approximation \cite{doi:10.1021/acs.jpcb.3c03649}.

In this article, we follow a direct approach for formalizing the topological reduction work of \cite{Hirono2021} categorically.
Our definitions are not boiler plate reproductions of the work of either \cite{spivak2015steady} or \cite{Baez_2017}, which are the two works closest to this one.
In particular, by providing a new \emph{pointed} definition of a CRN we are able to describe structural reductions using morphisms, which are subtly different from the morphisms between Petri nets used in \cite{baez2021nets}.
En route to this result, we show that oriented hypergraph homology in the sense of \cite{diestel2020homological}, defines a functor from our category of CRNs.
This proves that the ad hoc object-wise construction of homology used by \cite{Hirono2021} can be promoted to a functor.
We finish by connecting the categorical study of the Schur complement, pioneered in \cite{henselman2017matroids}, to the structural reduction of a CRN and prove that this can be viewed as a diagrammatic complement in the category of arrows on CRNs.

\chapter{The Category of CRNs}

Chemical Reaction Networks (CRNs) are a popular object of study within the applied category theory community.
We now introduce the category we choose to work with, which is slightly different from the presentation in both \cite{Baez_2017} and \cite{Hirono2021}.

\begin{definition}[Chemical Reaction Network, cf. Def. 8 of \cite{Hirono2021}]\label{defn:CRN}
    A (generalized) \define{chemical reaction network (CRN)} is a quadruple $\Gamma = (V,E,s,t)$, consisting of
    \begin{itemize}
        \item a set $V$ of vertices, interpreted as chemical species,
        \item a necessarily non-empty \emph{pointed} set $E_*=E \sqcup \{e_*\}$ of directed hyperedges, interpreted as chemical reactions, where
        \item the two maps $s,t : E_{\ast} \to \mathbb{R}^V = \{f|f:V \to \mathbb{R} \}$, specify the required inputs and resulting outputs of each reaction $e_A\in E_*$.
    \end{itemize}
    Every CRN has an associated \define{stoichiometry matrix} $S_\Gamma=(S_{iA})$, whose entries are
    \[
    S_{iA}:=t(e_A)(v_i)-s(e_A)(v_i),
    \]
    where the $i^{th}$ row corresponds to vertex $v_i$ and the $A^{th}$ column corresponds to edge $e_A$.
    If an edge $e\in E_*$ satisfies $s(e)=t(e)\in \R_{\geq 0}^V$, then we say that $e$ is \define{degenerate}.
    We assume further that the ``do nothing'' edge $e_*\in E_*$ is degenerate and 0, i.e., $s(e_*)=t(e_*)=0\in \R_{\geq 0}^V$.
    This implies that the ``do nothing'' edge is equivalently viewed as a column of zeros that we can drop as needed.
\end{definition}

\begin{example}
    The \define{trivial} (or empty) CRN, written $\varnothing$, is the unique CRN determined by $V=E=\varnothing$.
    Note that in this setting $E_*=\{e_*\}$ admits a natural map to $\R^{\varnothing}=0$, the zero vector space.
\end{example}

\begin{definition}[CRN morphism]\label{defn:CRN-morphism}
    A $\define{CRN morphism},$ $\phi$ from $\Gamma = (V, E, s, t)$ to $\Gamma' = (V', E', s', t')$ is a pair of maps, $(\phi_0, \phi_1)$ where
    \begin{equation}
        \phi_0: \mathbb{R}^{V} \to \mathbb{R}^{V'} \text{ is a linear map, and }
    \end{equation}
    \begin{equation}
        \phi_1: E_* \to E'_* \text{ is a pointed set map,}
    \end{equation}
    which must make the following diagrams commute:
    \begin{equation}
        \begin{tikzcd}
    	E_* & {\mathbb{R}^V} & E_* & {\mathbb{R}^V} \\
    	{E'_*} & {\mathbb{R}^V} & {E'_*} & {\mathbb{R}^V}
    	\arrow["s", from=1-1, to=1-2]
    	\arrow["{\phi_1}", from=1-1, to=2-1]
    	\arrow["{\phi_0}", from=1-2, to=2-2]
    	\arrow["t", from=1-3, to=1-4]
    	\arrow["{\phi_1}", from=1-3, to=2-3]
    	\arrow["{\phi_0}", from=1-4, to=2-4]
    	\arrow["{s'}", from=2-1, to=2-2]
    	\arrow["{t'}", from=2-3, to=2-4]
        \end{tikzcd}
    \end{equation}
\end{definition}

\begin{proposition}\label{prop:CRN-pointed-category}
    Every CRN $\Gamma$ admits an identity morphism to itself.
    Composition of CRN morphisms exists and is associative, thus making the collection of all CRNs, written \textbf{CRN}, into a category.
    Moreover, every $\Gamma\in \mathbf{ob}(\textbf{CRN})$ has a unique map \emph{to} and a unique map \emph{from} the trivial CRN $\varnothing$, thereby making \textbf{CRN} a pointed category. 
\end{proposition}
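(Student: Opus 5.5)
The plan is to verify the category axioms directly from Definition~\ref{defn:CRN-morphism}, and then to exhibit the zero morphisms to and from $\varnothing$ and check their uniqueness.

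\textbf{Identity and composition.} For $\Gamma=(V,E,s,t)$, I take $\mathrm{id}_\Gamma=(\mathrm{id}_{\R^V},\mathrm{id}_{E_*})$. Both squares commute trivially, and the identity on $E_*$ preserves the basepoint. Given $\phi=(\phi_0,\phi_1):\Gamma\to\Gamma'$ and $\psi=(\psi_0,\psi_1):\Gamma'\to\Gamma''$, I define $\psi\circ\phi=(\psi_0\circ\phi_0,\psi_1\circ\phi_1)$. The first component is linear and the second is a pointed map, since composites of linear maps are linear and composites of pointed maps are pointed. The required squares commute by pasting:
\[
s''\circ\psi_1\circ\phi_1=\psi_0\circ s'\circ\phi_1=\psi_0\circ\phi_0\circ s,
\]
and the same computation works for $t$. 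Associativity and the unit laws hold componentwise, because they hold in $\mathbf{Vect}$ and in pointed sets. So $\mathbf{CRN}$ is a category.

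\textbf{Maps from $\varnothing$.} Here $\R^{\varnothing}=0$ and $E_*=\{e_*\}$. A morphism $\varnothing\to\Gamma$ needs a linear map $0\to\R^V$, and the only one is the zero map. It also needs a pointed map $\{e_*\}\to E_*$, and the only one sends $e_*\mapsto e_*$. The squares commute because
\[
s(e_*)=0=\phi_0(0), \qquad t(e_*)=0=\phi_0(0),
\]
using the standing assumption that $s(e_*)=t(e_*)=0$. This is exactly where the assumption in Definition~\ref{defn:CRN} is used.

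\textbf{Maps to $\varnothing$.} A morphism $\Gamma\to\varnothing$ needs a linear map $\R^V\to 0$, which is unique. It also needs a pointed map $E_*\to\{e_*\}$, which is unique because the target is a singleton. Both squares end in the zero space, so they commute automatically.

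Hence $\varnothing$ is a zero object, and $\mathbf{CRN}$ is pointed. I expect no real obstacle. The only point requiring care is the commutativity check for maps out of $\varnothing$: it needs both the basepoint condition on $\phi_1$ and the normalization $s(e_*)=t(e_*)=0$, and I will state this explicitly.
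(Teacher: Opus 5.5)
Your proof is correct, and for identities and composition it is the paper's argument: compose the linear and pointed-set components separately, and get the required squares by pasting. The paper's proof never checks the zero-object claim, so your verification of the unique maps to and from $\varnothing$ fills a step the paper leaves out, including the point that the normalization $s(e_*)=t(e_*)=0$ is exactly what makes the squares for maps out of $\varnothing$ commute.
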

\begin{proof}
    The existence of identities is clear.
    If $\phi=(\phi_0,\phi_1):\Gamma \to \Gamma'$ and $\phi'=(\phi_0',\phi_1'):\Gamma' \to \Gamma''$ are two CRN morphisms, then the composition $\phi' \circ \phi=(\phi_0'\circ \phi_0,\phi_1'\circ\phi_1)$ exists because composition exists in the category of pointed sets and linear maps. The fact that the composition commutes with $(s,t),(s'',t'')$ is clear from the fact that pasting commutative squares produces larger commuting rectangles.

    \[\begin{tikzcd}
	E_* & {\mathbb{R}^V} & E_* & {\mathbb{R}^V} \\
	{E_*'} & {\mathbb{R}^{V'}} & {E_*'} & {\mathbb{R}^{V'}} \\
	{E_*''} & {\mathbb{R}^{V''}} & {E_*''} & {\mathbb{R}^{V''}}
	\arrow["s", from=1-1, to=1-2]
	\arrow["{\phi_1}"', from=1-1, to=2-1]
	\arrow["{\phi_0}", from=1-2, to=2-2]
	\arrow["t", from=1-3, to=1-4]
	\arrow["{\phi_1}"', from=1-3, to=2-3]
	\arrow["{\phi_0}", from=1-4, to=2-4]
	\arrow["{s'}"', from=2-1, to=2-2]
	\arrow["{\phi'_1}"', from=2-1, to=3-1]
	\arrow["{\phi'_0}", from=2-2, to=3-2]
	\arrow["{t'}"', from=2-3, to=2-4]
	\arrow["{\phi'_1}"', from=2-3, to=3-3]
	\arrow["{\phi'_0}", from=2-4, to=3-4]
	\arrow["{s''}"', from=3-1, to=3-2]
	\arrow["{t''}"', from=3-3, to=3-4]
\end{tikzcd}\]
\end{proof}

\subsection{Categorical Comparisons}

On first glance, a CRN $\Gamma$ appears to be just a special type of functor $G$ from the digon category $\mathbf{D}$, drawn below, to the category of sets $\mathbf{Set}$.

\begin{center}
\begin{tikzpicture}
  \node at (-1,0) {$\mathbf{D}$};

  \node (L1) at (0,0) {$\bullet$}; 
  \node (R1) at (2,0) {$\bullet$}; 

  \draw[->] (L1) to[bend left=30] (R1);
  \draw[->] (L1) to[bend right=30] (R1);

  \draw[->, decorate, decoration={snake, amplitude=.4mm, segment length=2mm, post length=1mm}] 
    (2.5,0) -- node[above, yshift=1mm] {$G$} (3.5,0);

  \node (E) at (4,0) {$E$}; 
  \node (V) at (6,0) {$V$}; 

  \draw[->] (E) to[bend left=30] node[above] {$s$} (V);
  \draw[->] (E) to[bend right=30] node[below] {$t$} (V);

  \node at (7.5,0) {$\mathbf{Set}$};
\end{tikzpicture}
\end{center}

We now explain why this is not the case.
Any functor $G \in \mathbf{ob}([\mathbf{D},\mathbf{Set}])$ is equivalently a \define{directed graph}.
As \cite{Hirono2021} points out, directed graphs only model \emph{monomolecular reactions}, i.e.~reactions with at most one chemical input and one chemical output.
However, non-monomolecular reactions are commonplace. Consider the following example:

\begin{example}[cf. Example 1 from \cite{Hirono2021}]\label{ex:1}
Consider the CRN
\[
\Gamma = (\{v_1, v_2, v_3, v_4, v_5\}, \{e_1, e_2, e_3, e_4, e_5, e_6\})
\]
given by the following set of chemical reactions:
\begin{align*}
e_1&: \text{(input)} \to v_1 & e_4&: v_3 \to v_5 \\
e_2&: \text{(input)} \to v_2 & e_5&: v_4 \to \text{(output)} \\
e_3&: v_1 + v_2 \to v_3 + v_4 & e_6&: v_5 \to \text{(output)}
\end{align*}
This can be drawn as:
\begin{center}
\begin{tikzpicture}[
    vnode/.style={circle, draw, minimum size=7mm, inner sep=0pt},
    enode/.style={rectangle, draw, fill=gray!30, minimum size=7mm, inner sep=2pt, font=\small}
]
    \node[vnode] (v1) at (0,2) {$v_1$};
    \node[vnode] (v2) at (3,2) {$v_2$};
    \node[vnode] (v3) at (0,0) {$v_3$};
    \node[vnode] (v4) at (3,0) {$v_4$};
    \node[vnode] (v5) at (-2,0) {$v_5$};

    \node[enode] (e3) at (1.5,1) {$e_3$};

    \draw[->, thick] (-1.5, 2) -- node[above] {$e_1$} (v1);
    \draw[->, thick] (4.5, 2) -- node[above] {$e_2$} (v2);
    \draw[->, thick] (v3) -- node[above] {$e_4$} (v5);
    \draw[->, thick] (v4) -- node[above] {$e_5$} (4.5,0);
    \draw[->, thick] (v5) -- node[above] {$e_6$} (-3.5,0);

    \draw[->, thick] (v1) -- (e3);
    \draw[->, thick] (v2) -- (e3);
    \draw[->, thick] (e3) -- (v3);
    \draw[->, thick] (e3) -- (v4);

\end{tikzpicture}
\end{center}
The stoichiometry matrix $S_{\Gamma}$ is
\[
S = \begin{pmatrix}
1 & 0 & -1 & 0 & 0 & 0 \\
0 & 1 & -1 & 0 & 0 & 0 \\
0 & 0 & 1 & -1 & 0 & 0 \\
0 & 0 & 1 & 0 & -1 & 0 \\
0 & 0 & 0 & 1 & 0 & -1
\end{pmatrix}.
\]
\end{example}

\cite{Baez_2017} and others use the grey box representation of reactions to convert the hyperedges of a CRN into vertices of a particular type. 
This then treats CRNs as particular examples of directed bipartite graphs.
We argue that this perspective lacks the clear connection to homology presented in the next section.

\chapter{Quiver Representations, Stoichiometry and Homology}

We now illustrate how to convert a CRN into a type--$A_2$ quiver representation, which is equivalently a matrix.
Simply stated, we can map every CRN $\Gamma$ to its associated stoichiometry matrix $S_{\Gamma}$ and this association is functorial.
Since kernels and cokernels are functorial, and we can interpret these as $H_1$ and $H_0$ of an oriented hypergraph, this makes homology of CRNs functorial.

\begin{definition}\label{defn:quiver-rep-category}
Let \(\mathbf{A}_2\) be the category with two objects \(0,1\) and exactly
three morphisms \(\mathrm{id}_0,\mathrm{id}_1,u\) with \(u:0\to 1\) the unique non-identity.

\noindent The functor category \([\mathbf{A}_2,\mathbf{Vect}]\) (the arrow category of \(\mathbf{Vect}\)) is defined as follows.
\begin{itemize}
  \item \textbf{Objects} are functors \(A:\mathbf{A}_2\to \mathbf{Vect}\), also called \define{quiver representations}, which are equivalently linear maps
  \[
    A(u):A(0)\longrightarrow A(1).
  \]
  To simplify, the functor $A$ is simply \(A:V\to W\) with \(V:=A(0)\) and \(W:=A(1)\).
  \item \textbf{Morphisms} \(\phi:A\Rightarrow A'\) are natural transformations. This is a pair of linear maps \(\phi_1:V\to V'\) and \(\phi_0:W\to W'\) making the square below commute,
  \[
  \begin{tikzcd}[column sep=large]
    V \arrow[r,"A"] \arrow[d,"\phi_1"'] &
    W \arrow[d,"\phi_0"] \\
    V' \arrow[r,"A'"'] &
    W'
  \end{tikzcd}
  \]
  i.e., $A'\circ \phi_1 = \phi_0 \circ A$.
\end{itemize}
\end{definition}

\begin{proposition}\label{prop:quiver}
    Stoichiometry specifies quiver representations in a functorial way, i.e.,
    \[
        \mathrm{Stoich}:\mathbf{CRN}\longrightarrow [\mathbf{A}_2,\mathbf{Vect}] \qquad \text{is a functor.}
    \]
\end{proposition}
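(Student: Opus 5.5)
The plan is to define $\mathrm{Stoich}$ on objects and morphisms, then check the functor axioms. The main work is producing, from a CRN morphism, a commuting square in $[\mathbf{A}_2,\mathbf{Vect}]$.

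\emph{Objects.} For a CRN $\Gamma=(V,E,s,t)$, let $\mathbb{R}^{E}$ be the free vector space on the non-basepoint edges. Equivalently, it is the quotient of $\mathbb{R}^{E_*}$ that kills $e_*$, which makes sense because $e_*$ is the zero column. Define the linear map
\[
S_\Gamma:\mathbb{R}^{E}\to\mathbb{R}^V, \qquad e\mapsto t(e)-s(e),
\]
extended linearly. Its matrix is exactly the stoichiometry matrix of Definition~\ref{defn:CRN}. Set $\mathrm{Stoich}(\Gamma)$ to be this object $S_\Gamma$ of $[\mathbf{A}_2,\mathbf{Vect}]$. For the trivial CRN $\varnothing$ this gives $0\to 0$.

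\emph{Morphisms.} Let $\phi=(\phi_0,\phi_1):\Gamma\to\Gamma'$ be a CRN morphism. The pointed map $\phi_1:E_*\to E'_*$ induces a linear map $\widetilde{\phi}_1:\mathbb{R}^{E}\to\mathbb{R}^{E'}$ given by $e\mapsto \phi_1(e)$. If $\phi_1(e)=e'_*$ we send $e$ to $0$. This is the free functor from pointed sets to vector spaces, $\mathbb{R}^{E_*}/\langle e_*\rangle$, and it preserves identities and composition because the free construction does. Define
\[
\mathrm{Stoich}(\phi)=(\widetilde{\phi}_1,\phi_0).
\]

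\emph{Naturality.} We must show $S_{\Gamma'}\circ\widetilde{\phi}_1=\phi_0\circ S_\Gamma$, and it suffices to check this on basis vectors $e\in E$.

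If $\phi_1(e)\neq e'_*$, then linearity of $\phi_0$ and the two commuting squares of Definition~\ref{defn:CRN-morphism} give
\[
\phi_0(t(e)-s(e))=\phi_0 t(e)-\phi_0 s(e)=t'(\phi_1 e)-s'(\phi_1 e)=S_{\Gamma'}(\widetilde{\phi}_1 e).
\]
If $\phi_1(e)=e'_*$, the same squares give $\phi_0 t(e)=t'(e'_*)=0$ and $\phi_0 s(e)=0$. Hence both sides vanish, since the left side is $S_{\Gamma'}(0)=0$. This basepoint case is where the convention that $e_*$ is degenerate and zero is genuinely used, and it is the main point requiring care. The argument only works because we pass to the reduced free space in which the basepoint is zero.

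\emph{Functoriality.} The identity CRN morphism maps to $(\mathrm{id},\mathrm{id})$. A composite $(\phi_0'\phi_0,\phi_1'\phi_1)$, as in Proposition~\ref{prop:CRN-pointed-category}, maps to $(\widetilde{\phi_1'\phi_1},\phi_0'\phi_0)=(\widetilde{\phi}_1'\widetilde{\phi}_1,\phi_0'\phi_0)$. The middle equality is checked on basis vectors, including those sent to the basepoint at either stage. Composition in $[\mathbf{A}_2,\mathbf{Vect}]$ is componentwise, so $\mathrm{Stoich}$ is a functor.
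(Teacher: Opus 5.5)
Your proof is correct and follows essentially the same route as the paper: you linearize $\phi_1$ by pushing basis vectors forward with the basepoint sent to $0$, keep $\phi_0$ as the map on $0$-chains, and subtract the two commuting squares to obtain $\phi_0\circ S_\Gamma = S_{\Gamma'}\circ\widetilde{\phi}_1$. Your explicit handling of edges sent to $e'_*$ (using $s'(e'_*)=t'(e'_*)=0$) and your check of identities and composition supply details the paper leaves implicit.
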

\begin{proof}
    On objects
    \[
    \mathrm{Stoich}(\Gamma)\;:=\;C_1(\Gamma) := \R^E \xrightarrow{\,S_\Gamma\,}\R^V =: C_0(\Gamma)
    \]
    assigns to each CRN its associated chain groups.
    On morphisms a CRN morphism $\phi=(\phi_1,\phi_0):\Gamma\to\Gamma'$ already specifies a linear map $\phi_0: \mathbb{R}^V\to \mathbb{R}^{V'}$.
    We'll set $(\phi_0)_* := \phi_0$ to refer to the map between $0$-chains $C_0(\Gamma)\to C_0(\Gamma')$.
    On edges, $\phi_1:E_*\to E'_*$ is a map of pointed sets.
    This allows us to define a map $(\phi_1)_*(e_a)=e_{\phi_1(a)}$ between bases with the understanding that $e_*$ is to be identified with the zero vector. Linear extension proves that we have a linear map $(\phi_1)_*:C_1(\Gamma)\to C_1(\Gamma')$.

    Since the commutativity conditions for a CRN morphism $\phi=(\phi_0,\phi_1)$ provides $\phi_0\circ s = s' \circ \phi_1$ and $\phi_0\circ t = t' \circ \phi_1$.
    After replacing $\phi_0$ and $\phi_1$ with their linearized versions $(\phi_0)_*$ and $(\phi_1)_*$ the commutativity conditions become
    \[
        (\phi_0)_*\circ s = s' \circ (\phi_1)_* \quad \text{and} \quad (\phi_0)_*\circ t = t' \circ (\phi_1)_*
    \]
    imply
    \[
        (\phi_0)_*\circ (t-s) = (t'-s') \circ (\phi_1)_* \Leftrightarrow (\phi_0)_*\circ S_{\Gamma} = S_{\Gamma'} \circ (\phi_1)_*,
    \]
    the natural transformation condition that specifies morphisms in $[\mathbf{A}_2,\mathbf{Vect}]$.
\end{proof}

\begin{proposition}\label{prop:homology}
    Homology is a functor from the category of $A_2$ quiver representations to the category of $\{0,1\}$ graded vector spaces. Specifically, 
    \[
        H_*:[\mathbf{A}_2,\mathbf{Vect}] \to \mathbf{2grVect} \quad \text{where} \quad H_0(A):=\mathrm{coker}\, A \quad \text{and}\quad  H_1(A):=\ker A.
    \]
\end{proposition}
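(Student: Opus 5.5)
We would prove this directly, by checking that kernels and cokernels are functorial with respect to the commuting squares that constitute morphisms in $[\mathbf{A}_2,\mathbf{Vect}]$. Here $\mathbf{2grVect}$ is the category whose objects are pairs $(H_0,H_1)$ of vector spaces and whose morphisms are pairs of linear maps in each degree. On objects, for $A:V\to W$ we set $H_1(A)=\ker A\subseteq V$ and $H_0(A)=\mathrm{coker}\,A = W/\mathrm{im}\,A$.

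On morphisms, let $\phi=(\phi_1,\phi_0):A\Rightarrow A'$ with $A'\circ\phi_1=\phi_0\circ A$.

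\emph{Degree 1.} If $v\in\ker A$, then $A'(\phi_1 v)=\phi_0(Av)=0$. Hence $\phi_1$ restricts to a linear map $H_1(\phi):\ker A\to\ker A'$.

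\emph{Degree 0.} The composite $W\xrightarrow{\phi_0}W'\to W'/\mathrm{im}\,A'$ kills $\mathrm{im}\,A$, because $\phi_0(Av)=A'(\phi_1 v)\in\mathrm{im}\,A'$. By the universal property of the quotient it therefore descends to a unique linear map $H_0(\phi):\mathrm{coker}\,A\to\mathrm{coker}\,A'$, given by $[w]\mapsto[\phi_0 w]$.

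Functoriality is then routine. The identity transformation $(\mathrm{id}_V,\mathrm{id}_W)$ restricts to the identity on $\ker A$ and induces the identity on $\mathrm{coker}\,A$. For a composite $\phi'\circ\phi=(\phi'_1\phi_1,\phi'_0\phi_0)$, restriction of a composite is the composite of restrictions. On cokernels, $[w]\mapsto[\phi'_0\phi_0 w]$ agrees with $H_0(\phi')\circ H_0(\phi)$, either by direct evaluation or by the uniqueness clause of the quotient's universal property.

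Alternatively, one can note that $\ker$ and $\mathrm{coker}$ are the limit and colimit-type constructions (equalizer with $0$, coequalizer with $0$) on the arrow category. Such constructions are functorial whenever they are chosen canonically, which they are in $\mathbf{Vect}$ as subspaces and quotients. Pairing the two functors then gives $H_*$.

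The only point needing care is well-definedness of $H_0(\phi)$, which rests entirely on the naturality square; everything else is immediate. We do not anticipate any genuine obstacle. We would also remark that composing with Proposition~\ref{prop:quiver} yields homology of CRNs as a functor $H_*\circ\mathrm{Stoich}:\mathbf{CRN}\to\mathbf{2grVect}$.
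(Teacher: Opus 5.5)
Your proposal is correct and follows essentially the same route as the paper. The paper simply notes that homology of a two-term complex reduces to the kernel and cokernel and that these constructions are functorial; you write out that verification explicitly (restriction to $\ker$, descent to $\mathrm{coker}$ via the naturality square, and the identity and composition checks).
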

\begin{proof}
    The standard definition of homology as kernel modulo the image, specializes to the kernel and cokernel, respectively for a two-term chain complex.
    As these constructions are functorial, this completes the proof.
\end{proof}

\begin{corollary}
    Homology is functorial on CRNs.
\end{corollary}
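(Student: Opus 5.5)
The corollary follows by composing the two functors already constructed. Proposition~\ref{prop:quiver} gives a functor $\mathrm{Stoich}:\mathbf{CRN}\to[\mathbf{A}_2,\mathbf{Vect}]$, sending $\Gamma$ to $S_\Gamma:C_1(\Gamma)\to C_0(\Gamma)$. Proposition~\ref{prop:homology} gives a functor $H_*:[\mathbf{A}_2,\mathbf{Vect}]\to\mathbf{2grVect}$. Since a composite of functors is a functor, the composite
\[
H_*\circ \mathrm{Stoich}:\mathbf{CRN}\longrightarrow \mathbf{2grVect}
\]
is a functor. It assigns to each CRN the graded space $H_1(\Gamma)=\ker S_\Gamma$ and $H_0(\Gamma)=\mathrm{coker}\, S_\Gamma$. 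These are exactly the groups that \cite{Hirono2021} defines object-wise.

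For completeness I would also write out what the functor does on morphisms. Let $\phi=(\phi_0,\phi_1):\Gamma\to\Gamma'$ be a CRN morphism. The naturality square $(\phi_0)_*\circ S_\Gamma=S_{\Gamma'}\circ(\phi_1)_*$ yields two induced maps:
\begin{itemize}
\item $(\phi_1)_*$ sends $\ker S_\Gamma$ into $\ker S_{\Gamma'}$, because if $S_\Gamma x=0$ then $S_{\Gamma'}(\phi_1)_*x=(\phi_0)_*S_\Gamma x=0$;
\item $(\phi_0)_*$ sends $\mathrm{im}\, S_\Gamma$ into $\mathrm{im}\, S_{\Gamma'}$, so it descends to a map $\mathrm{coker}\, S_\Gamma\to\mathrm{coker}\, S_{\Gamma'}$.
\end{itemize}
Identities and composites are preserved because restriction to subspaces and passage to quotients respect identities and composition of linear maps. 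This is the standard functoriality of $\ker$ and $\mathrm{coker}$.

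The one point that needs care is not in this corollary but in Proposition~\ref{prop:quiver}: checking that linearizing a pointed-set map respects composition. The basepoint $e_*$ is sent to the zero vector. So if $\phi_1(e_a)=e_*$, then $(\phi_1)_*e_a=0$, and composing with $(\phi'_1)_*$ still gives $0$. Hence $(\phi'_1\circ\phi_1)_*=(\phi'_1)_*\circ(\phi_1)_*$, and $\mathrm{Stoich}$ respects composition. With that granted, the corollary needs no further work.
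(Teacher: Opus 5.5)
Your proposal is correct and matches the paper's proof, which simply composes $\mathrm{Stoich}$ from Proposition~\ref{prop:quiver} with $H_*$ from Proposition~\ref{prop:homology}. Your description of the induced maps on $\ker$ and $\mathrm{coker}$ is correct and goes beyond what the paper writes. So is your check that linearizing pointed-set maps (with $e_*\mapsto 0$) respects composition, a detail the paper leaves implicit in Proposition~\ref{prop:quiver}.
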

\begin{proof}
    Combining Propositions~\ref{prop:quiver} and \ref{prop:homology} proves the result.
\end{proof}

\chapter{A Categorical Perspective on Topological Reduction}

Homology (and cohomology) are intricately related to the steady state dynamics of a chemical reaction network.
Specifically, we can regard elements $x\in C_0(\Gamma)=\R^V$ as the vector of concentrations $x(v_i)$ of each chemical species $v_i$.
Similarly, one can interpret elements $r\in C_1(\Gamma)=\R^E$ as giving reaction rates $r(e_A)$ for each reaction edge $e_A\in E$.
Dynamics are determined by the fundamental equation
\begin{equation}
    \frac{d}{dt} x(v_i) = Sr = \sum_{A\in E} S_{iA}r_A.
\end{equation}
Setting the left hand side to be zero is equivalent to asking for a reaction rate vector $r\in \ker S_{\Gamma}=H_1(\Gamma)\cong H^1(\Gamma)$.
Elements of $H_0(\Gamma)$ are related to so-called \emph{conserved charges} or \emph{conserved moiety} of a CRN.

Reduction methods for CRNs are intended to replace more complicated CRNs with simpler ones, while not changing their coarse, qualitative behaviors.
Prior reduction methods have tended to focus on the dynamics \cite{FEINBERG198059, FEINBERG1974775, FEINBERG19891819} or the spectrum \cite{jost2018hypergraph, veerman2022chemicalreactionnetworkslaplacian} of the CRN, but as the preceding paragraph demonstrates, both are manifestations of algebraic topological features of CRNs.
One of the first results of this section will be formulation of CRN reductions internal to the category $\mathbf{CRN}$.
This differs from \cite{Hirono2021}, where the concept of reduction is introduced less formally and relies on an orthogonal decomposition of a CRN into complementary networks.
The second major result of this section will be a categorical description of this Schur complement operation.

\begin{definition}[Reduction Morphism]
    Let $\gamma$ be a subnetwork of a CRN $\Gamma = (V,E,s,t)$. A \define{reduction morphism} associated to $\gamma$ is a CRN morphism $\phi:\Gamma \to \Gamma'$
    where
    \begin{itemize}
        \item Every reaction $e \in E_\gamma$, its image is degenerate in stoichiometry. Equivalently,
        \[
        \phi_0(s(e))=\phi_0(t(e)).
        \]
        \item $\phi_1$ is bijective on $E \setminus E_\gamma$
        \item For every reaction $e \in E \setminus E_\gamma$
        \[
        \phi_0(s(e)) = s(e) \qquad \textrm{and }\qquad \phi_0(t(e))=t(e)
        \]
    \end{itemize}
\end{definition}
\begin{example}[cf. Example 5 of \cite{Hirono2021}]
    In \cite{Hirono2021}, they consider the following CRN  $\Gamma$ where $(V, E) = (\{v_1, v_2\}, \{e_1, e_2, e_3\})$ and $s(e_2)=\delta_{v_1}$ and $t(e_2)=\delta_{v_2}$ are the Dirac deltas on $v_1$ and $v_2$, respectively.
    The subnetwork $\gamma = (V_{\gamma},E_{\gamma})=(\{v_1\},\{e_2\})$ is to be reduced.
    Graphically, the authors illustrate the reduction morphism via the following diagram:

    \begin{tikzpicture}[
    >={Stealth[length=3mm, width=2mm]},
    vertex/.style={circle, draw, thick, minimum size=8mm, inner sep=0pt, font=\large},
    every edge/.style={draw, thick, ->},
    label/.style={above, font=\large}]

    \coordinate (in_e1) at (0,0);
    \node[vertex] (v1) at (2,0) {$v_1$};
    \node[vertex] (v2) at (5,0) {$v_2$};
    \coordinate (out_e3) at (7,0);

    \draw[thick, ->] (in_e1) -- (v1) node[midway, label] {$e_1$};
    \draw[thick, ->] (v1) -- (v2) node[midway, label] {$e_2$};
    \draw[thick, ->] (v2) -- (out_e3) node[midway, label] {$e_3$};

    \draw[draw=red!80, dashed, thick, rounded corners] (1.2, -0.9) rectangle (4.3, 0.9);
    \node[red!80, font=\large] at (1.5, 0.6) {$\gamma$};

    \draw[->, line width=2.5pt, blue!70!cyan] (7.5,0) -- (9,0);

    \coordinate (in_e1_prime) at (9.5,0);
    \node[vertex] (v2_prime) at (11.5,0) {$v_2$};
    \coordinate (out_e3_prime) at (13.5,0);

    \draw[thick, ->] (in_e1_prime) -- (v2_prime) node[midway, label] {$e_1$};
    \draw[thick, ->] (v2_prime) -- (out_e3_prime) node[midway, label] {$e_3$};

    \end{tikzpicture}

    \noindent However, this is not a legitimate CRN morphism, as edges must be sent to edges and it is not clear where $e_2\in E$ is sent.
    Moreover, since \cite{Hirono2021} defines CRN morphisms as maps $\R^V \to \R^{V'}$ it is tempting to simply apply the projection map that forgets $v_1$, but this will produce the wrong answer as well.

    \noindent The remedy is to properly identify the quotient CRN as $\Gamma'$ where $(V',E')=(\{v_2'\},\{e_1',e_3'\})$ and $\phi_0$ is the map induced from $v_1\mapsto v_2'$ and $v_2\mapsto v_2'$.
    The pointed structure on $E_*'$ to provide a destination for $e_2$, which can then be disregarded later.
    The modified and correct picture for a CRN reduction morphism is then:

        \begin{tikzpicture}[
    >={Stealth[length=3mm, width=2mm]},
    vertex/.style={circle, draw, thick, minimum size=8mm, inner sep=0pt, font=\large},
    every edge/.style={draw, thick, ->},
    label/.style={above, font=\large}
]

\coordinate (in_e1) at (0,0);
\node[vertex] (v1) at (2,0) {$v_1$};
\node[vertex] (v2) at (5,0) {$v_2$};
\coordinate (out_e3) at (7,0);

\draw[thick, ->] (in_e1) -- (v1) node[midway, label] {$e_1$};
\draw[thick, ->] (v1) -- (v2) node[midway, label] {$e_2$};
\draw[thick, ->] (v2) -- (out_e3) node[midway, label] {$e_3$};

\draw[draw=red!80, dashed, thick, rounded corners] (1.2, -0.9) rectangle (4.3, 0.9);
\node[red!80, font=\large] at (1.5, 0.6) {$\gamma$};

\draw[->, line width=2.5pt, blue!70!cyan] (7.5,0) -- (9,0);

\coordinate (in_e1_prime) at (9.5,0);
\node[vertex] (v2_prime) at (11.5,0) {$v_2'$};
\coordinate (out_e3_prime) at (13.5,0);

\draw[thick, ->] (in_e1_prime) -- (v2_prime) node[midway, label] {$e_1'$};
\draw[thick, ->] (v2_prime) -- (out_e3_prime) node[midway, label] {$e_3'$};

\draw[thick, ->, draw=red] (v2_prime) to[out=115, in=65, looseness=6] node[midway, above, font=\large, text=red] {$\phi_1(e_2)=e_*$} (v2_prime);

\end{tikzpicture}

\end{example}

Although \cite{Hirono2021} introduces several examples of CRN reduction morphisms, the example above shows that these reductions are not actually morphisms. One of the contributions of this paper is to clear up this issue by introducing a pointed definition of a CRN morphism and providing a precise definition of a reduction morphism.
This is troubling as without a pointed construction many of the ``quotient networks'' $\Gamma/\gamma$ do not exist as intended.
The rationale of \cite{Hirono2021} proceeds by considering the steady state reaction and concentration rates $(\mathbf{r},\mathbf{x})$, whose components we'd like to separate into steady state reaction and concentration rates of a sub and quotient CRN $\gamma$ and $\Gamma'\cong \Gamma / \gamma$, i.e.,
\begin{equation*}
    \mathbf{r}=
    \begin{pmatrix}
        \mathbf{r_1} \\
        \mathbf{r_2}
    \end{pmatrix}
    ,
    \mathbf{x}=
    \begin{pmatrix}
        \mathbf{x_1} \\
        \mathbf{x_2}
    \end{pmatrix}    
\end{equation*}
where $(\mathbf{r_2},\mathbf{x_2})$ is a steady state solution for $\Gamma'\cong \Gamma / \gamma$ with stoichiometry matrix $S'$.

However, as \cite{Hirono2021} observes, not every subnetwork $\gamma \subseteq \Gamma$ allows such a direct sum decomposition.
To that end they prove the following theorem.
\begin{theorem}[cf. Theorem 4 of \cite{Hirono2021}]
\label{thm:reduction}
Let \(\gamma\) be a subnetwork of a CRN. If the \define{buffering index}
\[
\lambda(\gamma)
:= -|V_\gamma| + |E_\gamma| - |(\ker S)_{\mathrm{supp}\,\gamma}| + |P_{\gamma}(\operatorname{coker} S)|
\]
satisfies \(\lambda(\gamma)=0\), and if \(s(e)(v)=0\) for every \(e \in E\setminus E_\gamma\) and every \(v \in V_\gamma\), then the CRN can be reduced. Moreover, there are isomorphisms
\[
\ker S / \ker S_\gamma \cong \ker S',
\qquad
\operatorname{coker} S / \operatorname{coker} S_\gamma \cong \operatorname{coker} S',
\]
where \(S'\) is the stoichiometric matrix of the reduced network and \(P_\gamma\) is the projection onto the subnetwork \(\gamma\).
\end{theorem}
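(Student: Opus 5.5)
Write the stoichiometry matrix in block form, ordering species as $V_\gamma \sqcup (V\setminus V_\gamma)$ and reactions as $E_\gamma \sqcup (E\setminus E_\gamma)$:
\[
S=\begin{pmatrix} S_{11} & S_{12}\\ S_{21} & S_{22}\end{pmatrix},
\]
where $S_{11}$ is the stoichiometry restricted to $\gamma$ (so $S_\gamma$ is the column block $(S_{11};S_{21})$ regarded as a map into $\R^V$). The reduced network $\Gamma'$ has species $V\setminus V_\gamma$ and reactions $E\setminus E_\gamma$, with the Schur complement as its stoichiometry:
\[
S' = S_{22} - S_{21}S_{11}^{+}S_{12}.
\]
Here $S_{11}^{+}$ is a generalized inverse of $S_{11}$, which becomes a genuine inverse after removing cycles and conserved quantities of $\gamma$. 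The hypothesis $s(e)(v)=0$ for $e\notin E_\gamma$, $v\in V_\gamma$ says that outside reactions only produce species of $\gamma$ and never consume them. This is what makes $S'$ again the stoichiometry of an honest CRN: the new sources are the old sources restricted to $V\setminus V_\gamma$, and all of the Schur correction lands in the targets. I would also check that $\Gamma\to\Gamma'$ is then a reduction morphism in the sense above, sending $E_\gamma$ to $e_*$ and $\phi_0$ eliminating $V_\gamma$ via $-S_{21}S_{11}^{+}$.

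The core of the proof is linear algebra on the two-term complex, via Proposition~\ref{prop:quiver} and Proposition~\ref{prop:homology}. For kernels, I would define a map $\ker S\to\ker S'$ by $(r_1,r_2)\mapsto r_2$. The equation $S_{11}r_1+S_{12}r_2=0$ lets one solve $r_1=-S_{11}^{+}S_{12}r_2$ modulo $\ker S_{11}$, and substituting into the second row gives $S'r_2=0$. The kernel of this projection is $\{(r_1,0): Sr_1=0\}=\ker S_\gamma$. Dually, for cokernels, I would map $\operatorname{coker} S'\to\operatorname{coker} S$ through the dual Schur complement: a left null vector $\ell_2$ of $S'$ lifts to $(\ell_1,\ell_2)$ with $\ell_1=-\ell_2S_{21}S_{11}^{+}$. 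This identifies the conserved quantities of $\Gamma'$ with those of $\Gamma$ modulo those supported on $\gamma$, namely $P_\gamma(\operatorname{coker}S)$ and $\operatorname{coker}S_\gamma$.

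The main obstacle is surjectivity of these maps, that is, showing that every $r_2\in\ker S'$ actually lifts (and dually for conserved quantities). This requires $S_{12}r_2\in\operatorname{im}S_{11}$, together with the corresponding condition for left null vectors. This is exactly where $\lambda(\gamma)=0$ enters. I would compute $\dim\operatorname{im}S_{11}$ by rank–nullity:
\[
|E_\gamma|-\dim\ker S_{11} \quad\text{and}\quad |V_\gamma|-\dim\operatorname{coker}S_{11}.
\]
I would then identify $\dim\ker S_{11}$ with the kernel vectors of $S$ supported in $\gamma$, up to the contribution of $\ker S_\gamma$, and identify $\dim\operatorname{coker}S_{11}$ with $P_\gamma(\operatorname{coker}S)$ plus the emergent conserved quantities. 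Expanding these, $\lambda(\gamma)=0$ becomes the statement that no emergent cycles and no lost conserved quantities occur, i.e. the obstruction space $\operatorname{im}S_{12}/(\operatorname{im}S_{12}\cap\operatorname{im}S_{11})$ vanishes. I expect this bookkeeping to be the delicate step, and I would first verify it on Example~\ref{ex:1}.

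Finally, I would conclude by the first isomorphism theorem, or equivalently by the snake lemma applied to the short exact sequence of two-term complexes $\gamma\to\Gamma\to\Gamma'$ in $[\mathbf{A}_2,\mathbf{Vect}]$. The six-term sequence
\[
0\to\ker S_\gamma\to\ker S\to\ker S'\xrightarrow{\partial}\operatorname{coker}S_\gamma\to\operatorname{coker}S\to\operatorname{coker}S'\to0
\]
yields both isomorphisms exactly when the connecting map $\partial$ vanishes, and $\lambda(\gamma)=0$ is the dimension count forcing $\partial=0$.
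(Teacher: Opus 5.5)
The paper gives no proof of this theorem; it is quoted from \cite{Hirono2021}, so there is no route to compare against. The closest in-paper material is Proposition~\ref{prop:Schur}, whose hypotheses $\ker S_{11}\subseteq\ker S_{21}$ and $\operatorname{im}S_{12}\subseteq\operatorname{im}S_{11}$ are exactly what your maps need. Judged on its own, your proposal has a genuine gap. The only non-formal step, passing from $\lambda(\gamma)=0$ to the vanishing of the obstruction, is deferred, and the version you sketch is false.

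Already the map $(r_1,r_2)\mapsto r_2$ lands in $\ker S'$ only if $S_{21}$ kills $\ker S_{11}$, which is the ambiguity in $r_1$. Likewise, your lift $\ell_1=-\ell_2S_{21}S_{11}^{+}$ is a left null vector only under the same condition. So you use ``no emergent cycles'' before deriving it. By rank--nullity,
$\lambda(\gamma)=\bigl[\dim\ker S_{11}-\dim(\ker S)_{\mathrm{supp}\,\gamma}\bigr]+\bigl[\dim P_\gamma(\operatorname{coker}S)-\dim\operatorname{coker}S_{11}\bigr]$,
and only the first bracket is automatically nonnegative. Consider the output-complete network $e_1:\varnothing\to v_1$, $e_2:v_1\to v_2$, $e_3:v_2\to v_1+v_3$, $e_4:v_3\to\varnothing$ with $\gamma=(\{v_1,v_2\},\{e_2,e_3\})$. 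The brackets are $1$ and $-1$, so $\lambda(\gamma)=0$. Yet $e_2+e_3$ is an emergent cycle, $\operatorname{im}S_{12}\not\subseteq\operatorname{im}S_{11}$, and $\operatorname{coker}S=0$ while $\operatorname{coker}S_\gamma\neq0$ under either reading of $S_\gamma$. Hence the asserted cokernel isomorphism cannot hold. This network has no steady state with $r_{e_1}>0$, since $x_1+x_2$ grows at rate $r_{e_1}$. That is the kind of network the steady-state assumptions underlying \cite{Hirono2021} exclude. Your argument never invokes any such hypothesis, so it cannot go through as written.

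Even with such a hypothesis, $\lambda(\gamma)=0$ does not give $\operatorname{im}S_{12}\subseteq\operatorname{im}S_{11}$. Take $\gamma=(\{v_1,v_2\},\{v_1+v_2\to v_3\})$ inside the network with further reactions $v_4\to v_1$, $v_5\to v_2$, $v_6\to v_4+v_5$, $\varnothing\to v_6$, $v_3\to\varnothing$. It is output-complete and has a positive steady state, and its conserved quantity $x_1-x_2+x_4-x_5$ gives $\lambda(\gamma)=-2+1-0+1=0$. But $\operatorname{im}S_{12}=\R^2\neq\operatorname{span}(1,1)=\operatorname{im}S_{11}$, so $S'$ depends on the generalized inverse chosen. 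Also, $d\mapsto d_2$ does not send conserved quantities of $\Gamma$ to those of $\Gamma'$ (here $\Gamma'$ has none). Only the weaker condition $S_{12}(\ker S')\subseteq\operatorname{im}S_{11}$ holds.

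This is also where your six-term sequence needs repair. With $S_\gamma$ the column block into $\R^V$, its alternating dimension sum is $-|V\setminus V_\gamma|$, so it cannot be exact unless $V_\gamma=V$. The subcomplex that works is $\R^{E_\gamma}\to\{(x_1,S_{21}S_{11}^{+}x_1)\}$. It is closed under $S$ exactly when $\ker S_{11}\subseteq\ker S_{21}$, and is then isomorphic to $S_{11}$. The quotient has differential $S'$, and the connecting map is $\partial(r_2)=[S_{12}r_2]\in\operatorname{coker}S_{11}$. Exactness alone yields an alternating-sum identity valid for every $\partial$. So ``$\lambda(\gamma)=0$ is the dimension count forcing $\partial=0$'' is not an argument. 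You must show that, under output-completeness and an explicit steady-state hypothesis, $\lambda(\gamma)=0$ forces both $\ker S_{11}\subseteq\ker S_{21}$ and $\partial=0$; that is the missing heart of the proof.

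Separately, the Schur correction need not land only in targets. The outside reaction $\varnothing\to v_1$, with $\gamma=(\{v_1\},\{v_1+v_3\to v_4\})$, becomes $v_3\to v_4$ and so acquires a new source.
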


\subsection{The Categorical Schur Complement}

As Henselman notes in his thesis \cite{henselman2017matroids}, Schur complements can be interpreted as the diagrammatic complement in the category of $A_2$ quiver representations.
We now adapt those observations to prove the following proposition:
\begin{proposition}\label{prop:Schur}
    Let $\Gamma$ be a chemical reaction network and suppose its stoichiometric map in $[\mathbf{A}_2,\mathbf{Vect}]$ admits a block decomposition
    \[
    S =
    \begin{bmatrix}
     a_{11} & a_{12}\\
     a_{21} & a_{22}
    \end{bmatrix}
    :
    C_1^{\mathrm{int}} \oplus C_1^{\mathrm{ext}}
    \longrightarrow
    C_0^{\mathrm{int}} \oplus C_0^{\mathrm{ext}}.
    \]
    
    \begin{enumerate}
        \item If $a_{11}$ is invertible, define the \define{Schur complement} of $a_{11}$ to be
        \[
            \sigma := a_{22} - a_{21}a_{11}^{-1}a_{12}.
        \]
        Then $S$ is isomorphic in $[\mathbf{A}_2,\mathbf{Vect}]$ to the direct sum $a_{11} \oplus \sigma$.
    
        \item More generally, suppose $a_{11}$ may be singular, but the compatibility conditions
        \[
            \ker(a_{11}) \subseteq \ker(a_{21}),
            \qquad
            \operatorname{im}(a_{12}) \subseteq \operatorname{im}(a_{11})
        \]
        hold. Let $a_{11}^+$ be the Moore--Penrose pseudoinverse of $a_{11}$ and define
        \[
            \sigma := a_{22} - a_{21}a_{11}^+a_{12}.
        \]
        Then $\sigma$ is well-defined, independent of the choice of generalized inverse on $\operatorname{im}(a_{12})$, and is the reduced stoichiometric map associated to the elimination of the internal block.
    \end{enumerate}
    In particular, under these hypotheses, the stoichiometric map of the reduced network may be taken to be $S_{\Gamma'} = \sigma$.
\end{proposition}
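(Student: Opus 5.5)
The plan is to prove both parts by constructing explicit block-triangular automorphisms of the source and target of $S$. These automorphisms give a morphism in $[\mathbf{A}_2,\mathbf{Vect}]$, namely a pair $(\phi_1,\phi_0)$ with $\phi_0 S = S' \phi_1$ as in Definition~\ref{defn:quiver-rep-category}. Since both components will be invertible, the morphism is an isomorphism.

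For part (1), take
\[
L=\begin{bmatrix} 1 & 0\\ -a_{21}a_{11}^{-1} & 1\end{bmatrix}:C_0^{\mathrm{int}}\oplus C_0^{\mathrm{ext}}\to C_0^{\mathrm{int}}\oplus C_0^{\mathrm{ext}},
\qquad
R=\begin{bmatrix} 1 & -a_{11}^{-1}a_{12}\\ 0 & 1\end{bmatrix}
\]
on $C_1^{\mathrm{int}}\oplus C_1^{\mathrm{ext}}$. Both are invertible, since they are unipotent. A block multiplication gives $LSR=\begin{bmatrix} a_{11}&0\\0&\sigma\end{bmatrix}$, i.e.\ $L\circ S = (a_{11}\oplus\sigma)\circ R^{-1}$. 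So $(R^{-1},L)$ is a natural transformation $S\Rightarrow a_{11}\oplus\sigma$ with inverse $(R,L^{-1})$. The direct sum is computed componentwise in the functor category.

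For part (2), replace $a_{11}^{-1}$ by any generalized inverse $g$, meaning $a_{11}ga_{11}=a_{11}$; the Moore--Penrose $a_{11}^+$ is one such choice. The key step is to use the compatibility conditions to turn the two hypotheses into factorizations.
\begin{itemize}
\item From $\operatorname{im}(a_{12})\subseteq\operatorname{im}(a_{11})$, we get $a_{12}=a_{11}X$ for some $X$, hence $a_{11}ga_{12}=a_{12}$.
\item From $\ker a_{11}\subseteq\ker a_{21}$, we get $a_{21}=Ya_{11}$, hence $a_{21}ga_{11}=a_{21}$.
\end{itemize}
Then $a_{21}ga_{12}=Ya_{11}ga_{11}X=Ya_{11}X$. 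This expression does not depend on $g$, which gives well-definedness of $\sigma$. In fact only the restriction of $g$ to $\operatorname{im}(a_{12})\subseteq\operatorname{im}(a_{11})$ enters, and there any two right inverses of $a_{11}$ differ by something landing in $\ker a_{11}\subseteq\ker a_{21}$, which $a_{21}$ kills.

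With the same $L,R$ (using $g$ in place of $a_{11}^{-1}$), the identities above make the off-diagonal blocks vanish. For example, the $(1,2)$ entry of $SR$ is $a_{12}-a_{11}ga_{12}=0$, and the $(2,1)$ entry of $LSR$ reduces to $a_{21}-a_{21}ga_{11}=0$. So again $S\cong a_{11}\oplus\sigma$ in $[\mathbf{A}_2,\mathbf{Vect}]$.

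To justify calling $\sigma$ the reduced stoichiometric map, observe that homology (Proposition~\ref{prop:homology}) is additive. Hence
\[
\ker S\cong\ker a_{11}\oplus\ker\sigma,\qquad \operatorname{coker}S\cong\operatorname{coker}a_{11}\oplus\operatorname{coker}\sigma.
\]
Identifying $a_{11}$ with $S_\gamma$, we get $\ker S/\ker S_\gamma\cong\ker\sigma$ and similarly for cokernels. This matches Theorem~\ref{thm:reduction}, so we may take $S_{\Gamma'}=\sigma$.

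The main obstacle is this last identification. The statement ``is the reduced stoichiometric map'' is only as precise as the informal definition of reduction. I would settle for showing that $\sigma$ realizes the kernel and cokernel quotients of Theorem~\ref{thm:reduction}, that is, that it is the complementary summand of $a_{11}$ in a direct sum decomposition of $S$. I would not try to show that $\sigma$ arises from a reduction morphism of CRNs.
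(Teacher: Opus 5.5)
Your proposal is correct. Part (1) is the paper's argument up to notation. The paper's $L$ on $C_1$ and $R$ on $C_0$ are your $R$ and $L$. The paper also writes out biproduct maps $v_1,v_2,v_1^{-1},v_2^{-1}$ with $v_1v_1^{-1}+v_2v_2^{-1}=\mathrm{id}_S$, which your remark that direct sums in $[\mathbf{A}_2,\mathbf{Vect}]$ are componentwise makes unnecessary.

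In part (2) you take a different and stronger route. The paper argues elementwise. It lifts $a_{12}x$ to some $y$ with $a_{11}y=a_{12}x$, uses $\ker a_{11}\subseteq\ker a_{21}$ to see that $a_{21}y$ does not depend on the lift, and notes $a_{11}a_{11}^+a_{12}=a_{12}$. This defines $\sigma$ intrinsically. Beyond that, the paper only appeals to ``the same argument as in the linear case'' and never decomposes $S$ when $a_{11}$ is singular. Your factorizations $a_{12}=a_{11}X$ and $a_{21}=Ya_{11}$ give exactly the two identities $a_{11}ga_{12}=a_{12}$ and $a_{21}ga_{11}=a_{21}$ needed to kill the off-diagonal blocks. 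So you get $S\cong a_{11}\oplus\sigma$ from the compatibility conditions alone. Additivity of kernel and cokernel then ties $\sigma$ to Theorem~\ref{thm:reduction}. This gives real content to the clause ``is the reduced stoichiometric map,'' which the paper's proof leaves essentially unargued.

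One refinement to your last step concerns the kernel and cokernel sides, which behave differently.
On the kernel side, the inclusion $y\mapsto(y,0)$ of $\ker a_{11}$ into $\ker S$ is canonical, since it uses $\ker a_{11}\subseteq\ker a_{21}$. It matches the summand inclusion under your $R^{-1}$.
On the cokernel side, the map $C_0^{\mathrm{int}}\to\operatorname{coker}S$ induced by inclusion need not vanish on $\operatorname{im}a_{11}$. For example, take $a_{11}=a_{21}=1$ and $a_{12}=a_{22}=0$. The canonical map is instead the surjection $\operatorname{coker}S\to\operatorname{coker}a_{11}$ induced by projection. It is well defined because $\operatorname{im}a_{12}\subseteq\operatorname{im}a_{11}$, and $\operatorname{coker}\sigma$ is its kernel. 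So ``similarly for cokernels'' should be read in that dual form.
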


\begin{proof}
    We begin by rewriting $S_\Gamma$ as a block matrix acting on the "internal" and "external" parts of the chain groups. That is:
    \begin{equation*}
    S = \begin{bmatrix}
        a_{11} & a_{12} \\
        a_{21} & a_{22}
    \end{bmatrix}
    : \overbrace{C_1^{int} \oplus C_1^{ext}}^{C_1(\Gamma)} \to \overbrace{C_0^{int} \oplus C_0^{ext}}^{C_0(\Gamma)}
    \end{equation*}
    Define the inclusions and projections for the left hand side of $S$:
    \begin{align}\label{maps}
        \iota_{\textrm{ext}}^1:C_1^{ext} \to C_1 && \pi_{\textrm{ext}}^1:C_1 \to C_1 ^{\textrm{ext}} \\
        \iota_{\textrm{int}}^1:C_1^{int} \to C_1 && \pi_{\textrm{int}}^1:C_1 \to C_1 ^{\textrm{int}}
    \end{align}
    We define $\iota_{\textrm{ext}}^0, \iota_{\textrm{int}}^0, \pi_{\textrm{ext}}^0, \pi_{\textrm{int}}^0$ in the same way.
    If $a_{11}$ is invertible, then we set $\sigma:=a_{22} -a_{21}a_{11}^{-1}a_{12} :  C_1^{ext} \to  C_0^{ext}$.
    \newline If we consider the following automorphisms:
    \begin{align*}
        L = \begin{bmatrix}
            I & -a_{11}^{-1}a_{12} \\
            0 & I
            \end{bmatrix}:C_1 \to C_1
            &&
        R = \begin{bmatrix}
            I & 0 \\
            -a_{21}a_{11}^{-1} & I
            \end{bmatrix}:C_0 \to C_0   
    \end{align*}
    We notice and set
    \begin{equation*}
        M= RSL =  \begin{bmatrix}
            a_{11} & 0 \\
            0 & \sigma
            \end{bmatrix}
    \end{equation*} 
    Hence, the pairs $(L,R^{-1}):M \Rightarrow S$ and $(L^{-1},R):S \Rightarrow M$ are isomorphisms in the category $[\mathbf{A}_2, \textbf{Vect}]$. From \eqref{maps}, we see that these maps yield morphisms in $[\mathbf{A}_2, \textbf{Vect}]$:
    \begin{align*}
        v_1=(L\iota_{\textrm{int}}^1, R^{-1}\iota_{\textrm{int}}^0): a_{11}\Rightarrow S &&
        v_2=(L\iota_{\textrm{ext}}^1, R^{-1}\iota_{\textrm{ext}}^0): \sigma \Rightarrow S \\
        v_1^{-1}=(\pi_{\textrm{int}}^1L^{-1}, \pi_{\textrm{int}}^0R): S \Rightarrow a_{11} &&
        v_2^{-1}=(\pi_{\textrm{ext}}^1L^{-1}, \pi_{\textrm{ext}}^0R): S \Rightarrow \sigma
    \end{align*}
    These satisfy the property
    \begin{equation*}
        v_1v_1^{-1} + v_2v_2^{-1} = id_S
    \end{equation*}
    Thus, $S$ is isomorphic to the direct sum of $a_{11}$ and $\sigma$ in $[\mathbf{A}_2, \textbf{Vect}]$.

    \noindent In the case where $a_{11}$ is not invertible, we proceed with some additional conditions. Suppose that 
    \[
    \ker a_{11} \subseteq \ker a_{21}, \qquad \textrm{im}(a_{12})\subseteq \textrm{im}(a_{11})
    \]
    The latter condition reveals that for every $x \in C_{1}^{\textrm{ext}}$ there exists a $y \in C_1^{\textrm{int}}$ where 
    \[
    a_{11}y=a_{12}x
    \]
    The former condition implies that $a_{21}y$ is independent of $y$. To see this clearly, for a different $y'$, we see that 
    \[
    a_{11}(y-y') = 0
    \]
    implying $y - y' \in \ker (a_{11}) \subseteq \ker (a_{21})$, thus $a_{21}y = a_{21}y'$. Meaning that 
    \[
    x \longrightarrow a_{22}x - a_{21}y
    \]
    is well defined and defines a linear map.
    Since we had that $\textrm{im}(a_{12}) \subseteq \textrm{im}(a_{11})$. We obtain the following equation:
    \[
    a_{11}a_{11}^{+}a_{12} = a_{12}
    \]
    This means that $a_{12}^{+}a_{12}x$ lifts $a_{12}x$ via $a_{11}$. Hence
    \[
    \sigma (x) = a_{22}x - a_{21}a_{11}^{+}a_{12}x
    \]
    that is,
    \[
    \sigma = a_{22} - a_{21}a_{11}^{+}a_{12}
    \]
    Following the same argument as in the linear case, this formula is independent of the choice of our inverse, provided this agrees on $\textrm{im}(a_{12})$ mod $\ker (a_{11})$. Thus, our reduced stoichiometric matrix is determined by the block decomposition and the compatibility conditions.
\end{proof}
\begin{theorem}\label{thm:Cat_Schur}
    Let $\Gamma$ be a CRN and let $\gamma\subseteq \Gamma$ be a subnetwork with  $\lambda(\gamma)=0$ and $s(e)(v)=0$ for every $e \in E\setminus E_\gamma$ and $v \in V_\gamma$. Then the stoichiometric arrow $\mathrm{Stoich}(\Gamma)$ in $[\mathbf{A}_2,\mathbf{Vect}]$ decomposes as a categorical complement of the internal block, and the reduced CRN $\Gamma'$ obtained from the Hirono reduction satisfies
\[
\mathrm{Stoich}(\Gamma') \cong \sigma,
\]
where $\sigma$ is the Schur complement defined in Proposition \ref{prop:Schur}.
\end{theorem}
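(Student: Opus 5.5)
The plan is to reduce Theorem~\ref{thm:Cat_Schur} to Proposition~\ref{prop:Schur} by ordering the bases of $C_1(\Gamma)=\R^E$ and $C_0(\Gamma)=\R^V$ so that the subnetwork comes first. Set $C_1^{\mathrm{int}}=\R^{E_\gamma}$, $C_1^{\mathrm{ext}}=\R^{E\setminus E_\gamma}$, $C_0^{\mathrm{int}}=\R^{V_\gamma}$ and $C_0^{\mathrm{ext}}=\R^{V\setminus V_\gamma}$. With these choices $\mathrm{Stoich}(\Gamma)=S$ acquires the block form of Proposition~\ref{prop:Schur}, and $a_{11}=S_\gamma$ is the stoichiometry of $\gamma$ restricted to its own species.

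The first step is to verify the compatibility conditions of part 2 of Proposition~\ref{prop:Schur} from the two hypotheses.
\begin{itemize}
\item For $\operatorname{im}(a_{12})\subseteq\operatorname{im}(a_{11})$, I would use the buffering index. By the rank--nullity theorem, $-|V_\gamma|+|E_\gamma|=\dim\ker a_{11}-\dim\operatorname{coker} a_{11}$. So $\lambda(\gamma)=0$ says that the cycles of $\gamma$ equal the cycles of $\Gamma$ supported on $\gamma$, and that the cokernel of $a_{11}$ is exactly detected by the projections of global conserved quantities. Any element of $\operatorname{coker}a_{11}$ then extends to a left null vector $\ell$ of $S$ with $\ell a_{12}=0$ on the internal rows. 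This forces $a_{12}x\in\operatorname{im}a_{11}$.
\item For $\ker a_{11}\subseteq\ker a_{21}$, I would use the equality $\ker a_{11}=(\ker S)_{\mathrm{supp}\,\gamma}$ that comes out of the same index computation. If $y\in\ker a_{11}$, then $(y,0)\in\ker S$, so in particular $a_{21}y=0$.
\end{itemize}
The hypothesis $s(e)(v)=0$ for external edges and internal species says that external reactions only produce internal species and never consume them. This is what Hirono uses to guarantee that the rate equations of the quotient are well posed. I expect to need it only to identify $S_{\Gamma'}$ in the third step.

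The second step produces the categorical complement. When $a_{11}$ is invertible this is already part 1 of Proposition~\ref{prop:Schur}. In the singular case I would repeat the construction of $L$ and $R$ with $a_{11}^+$ in place of $a_{11}^{-1}$. The compatibility conditions give $a_{11}a_{11}^+a_{12}=a_{12}$ and $a_{21}a_{11}^+a_{11}=a_{21}$, so again $RSL=a_{11}\oplus\sigma$, with $L$ and $R$ unipotent and hence invertible. This yields the splitting maps $v_1,v_2,v_1^{-1},v_2^{-1}$ exactly as in that proof. It exhibits $\sigma$ as the complement of the internal summand $a_{11}\Rightarrow S$ in $[\mathbf{A}_2,\mathbf{Vect}]$.

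The third step identifies $\mathrm{Stoich}(\Gamma')$ with $\sigma$. Here $\Gamma'$ has species $V\setminus V_\gamma$ and reactions $E\setminus E_\gamma$, and Hirono's reduced stoichiometry is obtained by solving the internal steady-state equation $a_{11}r_1+a_{12}r_2=0$ and substituting the result into the external rows. This is precisely $x\mapsto a_{22}x-a_{21}y$ with $a_{11}y=a_{12}x$, which is $\sigma$ up to the sign convention for $y$. That yields an isomorphism using identity maps on the chain groups, once the bases are matched with $E'$ and $V'$. As a consistency check I would apply $H_*$ from Proposition~\ref{prop:homology} to the splitting $S\cong a_{11}\oplus\sigma$. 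This gives $\ker S\cong\ker a_{11}\oplus\ker\sigma$ and $\operatorname{coker}S\cong\operatorname{coker}a_{11}\oplus\operatorname{coker}\sigma$, which recovers the quotient isomorphisms of Theorem~\ref{thm:reduction}.

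The main obstacle is the first step, in particular deriving $\operatorname{im}a_{12}\subseteq\operatorname{im}a_{11}$ from $\lambda(\gamma)=0$. The buffering index is a dimension count, so turning it into a containment requires care with $P_\gamma(\operatorname{coker}S)$. My first attempt would be to show that the natural map $P_\gamma(\operatorname{coker}S)\to\operatorname{coker}a_{11}$ is injective, so that the index vanishing forces it to be surjective as well. If that fails, the fallback is to cite Hirono's Theorem 4, whose proof establishes exactly that the internal equations are solvable.
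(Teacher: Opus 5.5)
\textbf{Verdict: genuine gap.} Your overall route is the one the paper tacitly takes. The paper gives no separate proof of this theorem, and in Example~\ref{ex:2} it passes straight from ``output-complete and $\lambda(\gamma)=0$'' to ``Proposition~\ref{prop:Schur} applies''. Your Steps 2 and 3 are sound once the two containments are available. Step 2 even supplies the splitting $RSL=a_{11}\oplus\sigma$ in the singular case, which part 2 of Proposition~\ref{prop:Schur} never asserts.

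\textbf{Why Step 1 fails.} The gap is Step 1, and it cannot be repaired under the stated hypotheses. A vector supported on $E_\gamma$ has the form $(y,0)$, and $S(y,0)=(a_{11}y,a_{21}y)$, so $(\ker S)_{\mathrm{supp}\,\gamma}=\ker a_{11}\cap\ker a_{21}$. Rank--nullity then gives
\[
\lambda(\gamma)=\bigl[\dim\ker a_{11}-\dim(\ker a_{11}\cap\ker a_{21})\bigr]+\bigl[\dim P_\gamma(\operatorname{coker}S)-\dim\operatorname{coker}a_{11}\bigr].
\]
Only the first bracket has a sign, so $\lambda(\gamma)=0$ does not give $\ker a_{11}\subseteq\ker a_{21}$. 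The image condition has two separate problems.
\begin{itemize}
\item The comparison map you want to show is injective is not available. On quotients, $P_\gamma$ descends to a map $\operatorname{coker}S\to\operatorname{coker}a_{11}$ only if $\operatorname{im}a_{12}\subseteq\operatorname{im}a_{11}$ already holds, which is circular.
\item Even when a left null vector $\ell_1$ of $a_{11}$ does extend to some $\ell=(\ell_1,\ell_2)$ with $\ell S=0$, the second block column yields only $\ell_1a_{12}=-\ell_2a_{22}$. It does not yield $\ell_1a_{12}=0$.
\end{itemize}

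\textbf{Counterexample.} The hypotheses in fact imply neither containment. Take species $v_1,v_2,v_3$, reactions $e_1:v_1+v_2\to\varnothing$, $e_2:v_1\to v_1+v_3$, $e_3:\varnothing\to v_1$, $e_4:v_3\to v_2$, and $\gamma=(\{v_1,v_2\},\{e_1,e_2\})$.
\begin{itemize}
\item Neither $e_3$ nor $e_4$ consumes $v_1$ or $v_2$, so the output-completeness hypothesis holds.
\item $S$ has rank $3$, so $\operatorname{coker}S=0$.
\item $\ker S=\R(1,1,1,1)$ contains no nonzero vector supported on $\gamma$.
\end{itemize}
Hence $\lambda(\gamma)=-2+2-0+0=0$. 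Yet
\[
a_{11}=\begin{bmatrix}-1&0\\-1&0\end{bmatrix},\quad
a_{12}=\begin{bmatrix}1&0\\0&1\end{bmatrix},\quad
a_{21}=\begin{bmatrix}0&1\end{bmatrix},\quad
a_{22}=\begin{bmatrix}0&-1\end{bmatrix},
\]
so $e_2\in\ker a_{11}\setminus\ker a_{21}$ and $\operatorname{im}a_{12}=\R^2\not\subseteq\operatorname{im}a_{11}$. Since $a_{21}a_{11}^+=0$, we get $\sigma=a_{22}$, of rank $1$. Then $\operatorname{rank}a_{11}+\operatorname{rank}\sigma=2\neq 3=\operatorname{rank}S$, so $S\not\cong a_{11}\oplus\sigma$ in $[\mathbf{A}_2,\mathbf{Vect}]$. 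The statement itself fails here. Your fallback of citing Theorem~\ref{thm:reduction} cannot rescue Step 1 either, because its hypotheses also hold in this example.

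The network is not degenerate. Under mass action it has a unique nondegenerate positive steady state with $x_3=k_3/k_4$, whereas the reduced equation $\dot x_3=-k_4x_3$ forces $x_3=0$.

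\textbf{What is needed.} The missing ingredient is a hypothesis, not a cleverer argument. If you add $\ker a_{11}\subseteq\ker a_{21}$ and $\operatorname{im}a_{12}\subseteq\operatorname{im}a_{11}$ as assumptions, your Steps 2 and 3 prove the theorem. These conditions do hold in Example~\ref{ex:2}, where $a_{11}$ is surjective and $a_{21}$ kills $\ker a_{11}=\R(1,1,1)^{T}$.
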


\noindent To identify the CRN associated to our Schur complement, we need to define a functor that will allow us to reconstruct a CRN when given a map between vector spaces.

\begin{definition}
    Let $[\mathbf{A}_2,\mathbf{Vect}]_R$ be the category with the same objects as $[\mathbf{A}_2,\mathbf{Vect}]$
    (linear maps $A:V\to W$), and whose morphisms are the natural transformations
    $(\alpha,\beta)$ with a natural transformation $\beta A = A'\alpha$ such that $(\alpha,\beta)$ are \emph{pushforwards}
    in the chosen bases (i.e.\ $\alpha=(\phi_1)_*$, $\beta=(\phi_0)_*$ for set maps $\phi_1,\phi_0$).
    Fix once and for all ordered bases
    $\{e_{V_i}\}$ of $V$ and $\{e_{W_j}\}$ of $W$.
    We define the \textit{reconstruction functor}
    \[
        \mathrm{Recon} : [\mathbf{A}_2,\mathbf{Vect}]_R \longrightarrow \mathbf{CRN}
    \]
    on objects by
    \[
        \mathrm{Recon}(A)
        := \Gamma_A
        := \bigl(\{e_{V_i}\}, \{e_{W_j}\}, s_A:=S_{A^-}, t_A:=S_{A^+}\bigr),
    \]
    where the species are the chosen basis of $W$, reactions are the chosen basis of $V$, and we define the stoichiometric matrix $S_A$ to be the matrix of $A$ in these bases. The source/target maps $(s_A,t_A)$ are read columnwise from the negative/positive parts of $S_A$. That is, 
    \[
    S_A:=t_A - s_A
    \]
    
    \noindent A morphism in $[\mathbf{A}_2,\mathbf{Vect}]_R$ is written
    \[
    (\alpha,\beta):(V \xrightarrow{A} W) \Longrightarrow (V' \xrightarrow{A'} W')
    \quad\text{with}\quad \alpha=(\phi_1)_*,\ \beta=(\phi_0)_*,
    \]
    i.e.\ a commuting square
    \[
    \begin{tikzcd}[column sep=large]
        V \arrow[r,"A"] \arrow[d,"\alpha"'] &
        W \arrow[d,"\beta"] \\
        V' \arrow[r,"A'"'] &
        W'
    \end{tikzcd}
    \quad\text{with}\quad
    \beta \circ A = A' \circ \alpha.
    \]
    We define
    \[
        \mathrm{Recon}(\alpha,\beta)
        := (\phi_1,\phi_0) \colon \Gamma_A \longrightarrow \Gamma_{A'}.
    \]
    The commutativity condition
    $\beta \circ A = A' \circ \alpha$ is then precisely the CRN
    compatibility condition:
    \begin{equation*}
        \begin{tikzcd}
    	E & {\mathbb{R}^V} & E & {\mathbb{R}^V} \\
    	{E'} & {\mathbb{R}^{V'}} & {E'} & {\mathbb{R}^{V'}}
    	\arrow["s_A", from=1-1, to=1-2]
    	\arrow["{\phi_1}", from=1-1, to=2-1]
    	\arrow["{\phi_0}", from=1-2, to=2-2]
    	\arrow["t_A", from=1-3, to=1-4]
    	\arrow["{\phi_1}", from=1-3, to=2-3]
    	\arrow["{\phi_0}", from=1-4, to=2-4]
    	\arrow["{s_{A'}}", from=2-1, to=2-2]
    	\arrow["{t_{A'}}", from=2-3, to=2-4]
        \end{tikzcd}
    \end{equation*}
    The commutativity $\beta A = A' \alpha$ is exactly the CRN compatibility
    $(\phi_0)_*\,S_A = S_{A'}\,(\phi_1)_*$, so this is a well-defined CRN morphism.
        
\end{definition}

\begin{proposition}
    We have an adjunction $\textrm{Stoich} \dashv \textrm{Recon}$. Meaning that for every $\Gamma \in \mathbf{CRN}$ and $(A:V\to W) \in [\mathbf{A}_2, \mathbf{Vect}]_R$ there is a natural bijection
    \[
     \mathrm{Hom}_{[\mathbf{A}_2,\mathbf{Vect}]_R}\!\big(\mathrm{Stoich}(\Gamma),A\big)\ \cong\
    \mathrm{Hom}_{\mathbf{CRN}}\!\big(\Gamma,\mathrm{Recon}(A)\big)
    \]
    sending a commuting square $(C_1(\Gamma)\!\xrightarrow{S_\Gamma}\!C_0(\Gamma))\Rightarrow (V\!\xrightarrow{A}\!W)$
    to the corresponding CRN morphism $\Gamma\to\Gamma_A$
\end{proposition}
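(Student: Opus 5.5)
The plan is to build the bijection directly from the definitions, in both directions, and then check naturality. Fix $\Gamma=(V_\Gamma,E,s,t)$ and $(A:V\to W)$ with its chosen bases, so that $\mathrm{Recon}(A)=\Gamma_A$ has species $\{e_{W_j}\}$, reactions $\{e_{V_i}\}$ (pointed by adjoining $e_*$), and source/target $s_A=S_{A^-}$, $t_A=S_{A^+}$.

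\emph{Forward map.} A morphism $\mathrm{Stoich}(\Gamma)\Rightarrow A$ in $[\mathbf{A}_2,\mathbf{Vect}]_R$ is a commuting square $(\alpha,\beta)$ with $\beta S_\Gamma=A\alpha$. By the definition of the restricted category, $\alpha=(\phi_1)_*$ and $\beta=(\phi_0)_*$ for set maps. We send the square to $(\phi_1,\phi_0):\Gamma\to\Gamma_A$, extending $\phi_1$ pointedly by $e_*\mapsto e_*$. Columns sent to zero are read as going to $e_*$, as in the proof of Proposition~\ref{prop:quiver}.

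\emph{Backward map.} A CRN morphism $\psi=(\psi_0,\psi_1):\Gamma\to\Gamma_A$ goes to $((\psi_1)_*,(\psi_0)_*)$. By Proposition~\ref{prop:quiver}, this square commutes with $S_\Gamma$ and $S_{\Gamma_A}=t_A-s_A=A$. It is a pushforward because $\psi_1$ is a pointed set map. The two assignments are mutually inverse essentially by construction, since a pushforward determines its underlying pointed set map on basis vectors.

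\emph{Naturality.} Let $f:\Gamma'\to\Gamma$ be a CRN morphism and $(\alpha',\beta'):A\Rightarrow A'$ a morphism of $[\mathbf{A}_2,\mathbf{Vect}]_R$. We must check that precomposing with $\mathrm{Stoich}(f)$ and postcomposing with $(\alpha',\beta')$ corresponds to precomposing with $f$ and postcomposing with $\mathrm{Recon}(\alpha',\beta')$. This reduces to functoriality of pushforward, $(g\circ h)_*=g_*\circ h_*$, together with the functoriality already established for $\mathrm{Stoich}$ and $\mathrm{Recon}$.

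The main obstacle is the forward map. A commuting square $\beta S_\Gamma=A\alpha$ only gives equality of the \emph{differences} $t-s$. A CRN morphism, however, requires the two separate squares $\phi_0 s=s_A\phi_1$ and $\phi_0 t=t_A\phi_1$. The plan is to use that $\phi_1$ sends basis reactions to basis reactions and $\phi_0$ is a pushforward of species, so $\beta$ merely relabels and adds coordinates. From this we try to show that the positive and negative parts of each column are transported separately: $\beta(S_\Gamma e)^{\pm}=(\beta S_\Gamma e)^{\pm}$.

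This can fail. If $\phi_0$ merges two species carrying opposite signs in a column, cancellation occurs. It also fails if $\Gamma$ has a reaction with the same species on both sides, since then $s\neq S^-$. I expect to handle this in one of two ways. One option is to restrict to CRNs in which $s,t$ have disjoint supports and are recovered as $S^{\mp}$, matching the normal form built by $\mathrm{Recon}$. The other is to include this sign condition in the morphisms of $[\mathbf{A}_2,\mathbf{Vect}]_R$, which is the spirit of the ``restricted subcategory'' in the abstract. With that condition imposed, the two separate squares follow from the single stoichiometric square and the argument goes through.

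A second, minor point is that $\phi_0$ need not be a pushforward for arbitrary CRN morphisms, which only require it to be linear. The backward map therefore also needs $\mathbf{CRN}$-morphisms into $\Gamma_A$ to have pushforward components. I will state this as part of the same restriction.
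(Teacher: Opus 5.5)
Your outline matches the paper's proof: define $\Phi$ and $\Psi$ through the pushforward condition, observe that they are mutually inverse, and get naturality from $(g\circ h)_*=g_*\circ h_*$. The paper's proof, however, simply asserts that $(\phi_0)_*S_\Gamma=S_A(\phi_1)_*$ ``is exactly the CRN compatibility condition.'' In $\Psi$ it also writes $(\phi_0)_*$ as though the species component of a CRN morphism were a set map. You are right to flag both points, and neither is minor.

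For the second point, take $\Gamma$ with one species $x$ and $E=\varnothing$, and take $A:0\to\R$ with basis $\{w\}$. Every scalar $c$ gives a CRN morphism $\Gamma\to\Gamma_A$, since the only condition is $\phi_0(0)=0$. There are at most two pushforward squares, so the two Hom-sets do not have the same cardinality. The proposition is false as stated, and some restriction is unavoidable.

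The gap is in your repair, which you offer as a choice between two options. Each option removes only one of your two failure modes.
\begin{itemize}
\item \emph{Normal form alone does not suffice.} Let $\Gamma$ have the single reaction $e:x\to y$, so $s(e)=\delta_x=(S_\Gamma e)^-$ and $t(e)=\delta_y=(S_\Gamma e)^+$. Let $A=0:\R\to\R$, $\alpha:e\mapsto v$, and $\beta:x,y\mapsto w$. Then $\beta S_\Gamma=0=A\alpha$, but $\beta s(e)=\delta_w\neq 0=s_A(v)$.
\item \emph{The sign condition alone does not suffice.} Let $s(e)=\delta_x$, $t(e)=2\delta_x$, $A=[1]$, with identity pushforwards. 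The square commutes and $\beta(S_\Gamma e)^{\pm}=(\beta S_\Gamma e)^{\pm}$ holds, yet again $\beta s(e)=\delta_w\neq 0=s_A(v)$.
\end{itemize}
You need both restrictions at once.

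Both restrictions must also be imposed on the categories themselves, not only on the Hom-sets out of $\mathrm{Stoich}(\Gamma)$.
\begin{itemize}
\item \emph{Functoriality of $\mathrm{Recon}$.} Your naturality step cites it, but $\mathrm{Recon}$ as defined is not well defined on morphisms, for the same cancellation reason. Take $A:\R\to\R^2$ with column $(-1,1)^T$, $A'=0:\R\to\R$, and $\beta$ merging both species. $\mathrm{Recon}$ becomes a functor once the sign condition is part of the morphisms of $[\mathbf{A}_2,\mathbf{Vect}]_R$. That condition is closed under composition because $\alpha$ sends basis vectors to basis vectors or $0$.
\item \emph{Naturality in $\Gamma$.} This needs $\mathrm{Stoich}(f)$ to be a morphism of the restricted arrow category for every CRN morphism $f$. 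So normal form and pushforward species maps must define a subcategory of $\mathbf{CRN}$, on which the sign condition then holds automatically. It is not enough to constrain only the morphisms into $\Gamma_A$.
\end{itemize}
With these restrictions in place, your argument, which is then the paper's, goes through.
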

\begin{proof}
    Fix $\Gamma = (X,E,s,t)$ and $A$ in their respective categories.
    We demonstrate a natural bijection by showing a map $\Phi:\mathrm{Hom}_{[\mathbf{A}_2,\mathbf{Vect}]_R}\!\big(\mathrm{Stoich}(\Gamma),A\big) \to \mathrm{Hom}_{\mathbf{CRN}}\!\big(\Gamma,\mathrm{Recon}(A)\big) $ 
    with an inverse $\Psi$.
    \newline Observe that in $[\mathbf{A}_2, \mathbf{Vect}]_R$, a map between our objects $\textrm{Stoich}(\Gamma)$ and $A$ is written as
    \[
    (\alpha,\beta):(C_1 \xrightarrow{S_\Gamma} C_0) \to (V \xrightarrow{A} W). \qquad  \beta \circ S_\Gamma = A \circ \alpha
    \]
    The pushforward restriction requirement in $[\mathbf{A}_2, \mathbf{Vect}]_R$ demands that we have unique set maps $\phi_1:E \to B_V$ and $\phi_0:X \to B_W$ such that $\alpha = (\phi_1)_*$ and $\beta = (\phi_0)_*$ as pushforwards from the chosen bases. Writing $S_A$ as the matrix of $A$, the commutativity becomes
    \[
    (\phi_0)_*\,S_\Gamma \;=\; S_A\,(\phi_1)_*
    \]
    which is exactly the CRN compatibility condition for a morphism
    $(\phi_1,\phi_0):\Gamma\to\Gamma_A$.
    Thus we obtain a function
    \[
    \Phi:\ \mathrm{Hom}_{[\mathbf{A}_2,\mathbf{Vect}]}\bigl(\mathrm{Stoich}(\Gamma),A\bigr)
    \longrightarrow
    \mathrm{Hom}_{\mathbf{CRN}}\bigl(\Gamma,\Gamma_A\bigr),
    \qquad
    (\alpha,\beta)\mapsto (\phi_1,\phi_0).
    \]
    Similarly, in $\mathbf{CRN}$, given a map between $\Gamma$ and $\Gamma_A$:
    \[
    (\phi_1,\phi_0): \Gamma \to \Gamma_A
    \]
    With compatability conditions for the commutative squares
    \begin{equation*}
        \begin{tikzcd}
    	E & {\mathbb{R}^X} & E & {\mathbb{R}^X} \\
    	{E_A} & {\mathbb{R}^{V}} & {E_A} & {\mathbb{R}^{V}}
    	\arrow["s_\Gamma", from=1-1, to=1-2]
    	\arrow["{\phi_1}", from=1-1, to=2-1]
    	\arrow["{\phi_0}", from=1-2, to=2-2]
    	\arrow["t_\Gamma", from=1-3, to=1-4]
    	\arrow["{\phi_1}", from=1-3, to=2-3]
    	\arrow["{\phi_0}", from=1-4, to=2-4]
    	\arrow["{s_{A}}", from=2-1, to=2-2]
    	\arrow["{t_{A}}", from=2-3, to=2-4]
        \end{tikzcd}
    \end{equation*}
    Setting $\alpha = (\phi_1)_*$ and $\beta=(\phi_0)_*$, we have that
    $\beta \circ S_\Gamma = S_A \circ \alpha$, which is exactly the commuting square condition in $[\mathbf{A}_2,\mathbf{Vect}]_R$. Meaning $(\phi_1, \phi_0)$ is a morphism in $[\mathbf{A}_2, \mathbf{Vect}]$. This defines a function
    \[
    \Psi:\ \mathrm{Hom}_{\mathbf{CRN}}\bigl(\Gamma,\Gamma_A\bigr)
    \longrightarrow
    \mathrm{Hom}_{[\mathbf{A}_2,\mathbf{Vect}]}\bigl(\mathrm{Stoich}(\Gamma),A\bigr),
    \qquad
    (\phi_1,\phi_0)\mapsto \bigl((\phi_1)_*,(\phi_0)_*\bigr).
    \]
    By construction, $\Phi$ and $\Psi$ are inverse to each other. Given a pair $(\alpha,\beta)$ and reading off $(\phi_1,\phi_0)$ recovers $(\alpha,\beta)=\big((\phi_1)_*,(\phi_0)_*\big)$, and vice versa. Naturality in $\Gamma$ and $A$ follows from functoriality of pushforwards and composition of set maps
\end{proof}

\begin{example}[cf. Fig. 2 from \cite{Hirono2021}]
\label{ex:2}
Consider the CRN \(\Gamma\) on species \(V=\{v_1,v_2,v_3,v_4\}\) and reactions
\(E=\{e_1,\dots,e_7\}\), shown in the diagram below.  Let
\[
\gamma=(V_\gamma,E_\gamma),\qquad
V_\gamma=\{v_1,v_2\},\qquad E_\gamma=\{e_1,e_2,e_3\}.
\]
Every reaction whose source uses a species of \(V_\gamma\) lies in \(E_\gamma\), so
\(\gamma\) is output-complete.  A direct computation shows that
\(\lambda(\gamma)=0\), hence \(\gamma\) is a buffering structure and Hirono reduction applies.

Applying the stoichiometric functor
\[
\mathrm{Stoich}:\mathbf{CRN}\to [A_2,\mathbf{Vect}]
\]
sends \(\Gamma\) to the arrow
\[
C_1(\Gamma)\xrightarrow{S} C_0(\Gamma).
\]
The choice of \(\gamma\) determines splittings
\[
C_1(\Gamma)=C_1^{\mathrm{int}}\oplus C_1^{\mathrm{ext}},
\qquad
C_0(\Gamma)=C_0^{\mathrm{int}}\oplus C_0^{\mathrm{ext}},
\]
where
\[
C_1^{\mathrm{int}}=\mathrm{span}\{e_1,e_2,e_3\},\quad
C_1^{\mathrm{ext}}=\mathrm{span}\{e_4,e_5,e_6,e_7\},
\]
\[
C_0^{\mathrm{int}}=\mathrm{span}\{v_1,v_2\},\quad
C_0^{\mathrm{ext}}=\mathrm{span}\{v_3,v_4\}.
\]
With respect to this decomposition,
\[
S=
\begin{bmatrix}
a_{11} & a_{12}\\
a_{21} & a_{22}
\end{bmatrix}
=
\begin{tikzpicture}[baseline=(m.center)]
  \matrix (m) [matrix of math nodes,
               left delimiter={[},
               right delimiter={]},
               row sep=2pt,
               column sep=10pt,
               nodes={anchor=center}] {
    -1 & 0 & 1 & 0 & 1 & 1 & 0 \\
     0 & 1 & -1 & 0 & 0 & 0 & 0 \\
     1 & -1 & 0 & -1 & 0 & 0 & 0 \\
     0 & 0 & 0 & 1 & -1 & 0 & -1 \\
  };

  \node[left=4pt of m-1-1] {\color{mydarkred}$v_1$};
  \node[left=8pt of m-2-1] {\color{mydarkred}$v_2$};
  \node[left=8pt of m-3-1] {$v_3$};
  \node[left=8pt of m-4-1] {$v_4$};

  \node[above=4pt of m-1-1] {\color{mydarkred}$e_1$};
  \node[above=4pt of m-1-2] {\color{mydarkred}$e_2$};
  \node[above=4pt of m-1-3] {\color{mydarkred}$e_3$};
  \node[above=4pt of m-1-4] {$e_4$};
  \node[above=4pt of m-1-5] {$e_5$};
  \node[above=4pt of m-1-6] {$e_6$};
  \node[above=4pt of m-1-7] {$e_7$};

  \draw[mydarkred,dashed,rounded corners,thick]
    ($(m-1-1.north west)+(-0.2,0.2)$)
    rectangle
    ($(m-2-3.south east)+(0.2,-0.2)$);
\end{tikzpicture}.
\]
Proposition~\ref{prop:Schur} therefore gives a well-defined generalized Schur complement
\[
\sigma
=
a_{22}-a_{21}a_{11}^+a_{12}
=
\begin{bmatrix}
-1 & 1 & 1 & 0\\
1 & -1 & 0 & -1
\end{bmatrix}.
\]
By Theorem~\ref{thm:Cat_Schur}, \(\sigma\) is the stoichiometric arrow of the reduced CRN
\(\Gamma'\) on the surviving species \(\{v_3,v_4\}\). That is,
\[
\mathrm{Stoich}(\Gamma')\cong \sigma.
\]
The reduction leaves \(e_4\) and \(e_7\) unchanged, while the altered columns of \(\sigma\)
correspond to the rewiring
\[
e_5: \left( v_4\to v_1 \right) \mapsto \left( v_4\to v_3 \right),
\qquad
e_6: \left( \varnothing\to v_1 \right) \mapsto \left( \varnothing\to v_3 \right).
\]
Once the buffering
subnetwork \(\gamma\) is chosen, the rewired CRN is determined functorially by the
categorical complement of the internal block of \(\mathrm{Stoich}(\Gamma)\).

\begin{center}
    \begin{tikzpicture}
 
 
\node[species] (v1) at (0,    0) {$v_1$};
\node[species] (v3) at (1.5,  0) {$v_3$};
\node[species] (v4) at (3,    0) {$v_4$};
\node[species] (v2) at (0.75, 2) {$v_2$};
 
\node (d1) at (-1.25, 0) {};
\node (d2) at ( 4.25, 0) {};
 
\draw[-latex,line width=0.5mm] (d1)  edge node[above] {$e_6$}        (v1);
\draw[-latex,line width=0.5mm] (v1)  edge node[above] {$e_1$}        (v3);
\draw[-latex,line width=0.5mm] (v3)  edge node[right] {$e_2$}        (v2);
\draw[-latex,line width=0.5mm] (v2)  edge node[left]  {$e_3$}        (v1);
\draw[-latex,line width=0.5mm] (v3)  edge node[above] {$e_4$}        (v4);
\draw[-latex,line width=0.5mm] (v4)  edge node[above] {$e_7$}        (d2);
\draw[-latex,line width=0.5mm,out=270,in=270] (v4) edge node[below] {$e_5$} (v1);
 
\draw[mydarkred, dashed, line width=1pt]
  (-0.45,-0.45) -- (1.1,-0.45) -- (1.1,0.45) -- (2.25,0.45)
  -- (0.85,2.45) -- (-0.45,2.45) -- (-0.45,-0.45);
 
\node[mydarkred] at (1.6, 2.1)  {\scalebox{1.1}{$\gamma$}};
 
\draw[-latex, line width=2.5pt, blue!70!cyan] (5.25,0) -- (6.45,0);
 
\node[species] (rv3) at (8.5,  0) {$v_3'$};
\node[species] (rv4) at (10.0,  0) {$v_4'$};
 
\node (rd1) at (7.25, 0) {};
\node (rd2) at (11.25,0) {};
 
\draw[-latex,line width=0.5mm] (rd1) edge node[above] {$e_6'$} (rv3);
\draw[-latex,line width=0.5mm,out=60,in=120]  (rv3) edge node[above] {$e_4'$} (rv4);
\draw[-latex,line width=0.5mm,out=240,in=300] (rv4) edge node[below] {$e_5'$} (rv3);
\draw[-latex,line width=0.5mm] (rv4) edge node[above] {$e_7'$} (rd2);
 
\end{tikzpicture}
\end{center}

\end{example}

\newpage
\printbibliography

\end{document}